\def\input@path{{\string"D:/Weberszpil/Calculo Fracional/Artigos 2013/Leibniz Rule/CNSNS/CNSNS-2014-Ressub3/\string"/}}
\DeclareRobustCommand{\greektext}{%
  \fontencoding{LGR}\selectfont\def\encodingdefault{LGR}}
\DeclareRobustCommand{\textgreek}[1]{\leavevmode{\greektext #1}}
\newcommand{\lyxmathsym}[1]{\ifmmode\begingroup\def\b@ld{bold}
  \text{\ifx\math@version\b@ld\bfseries\fi#1}\endgroup\else#1\fi}
\theoremstyle{plain}
\newtheorem{thm}{\protect\theoremname}
\theoremstyle{plain}
\newtheorem{lem}[thm]{\protect\lemmaname}
\newenvironment{proof}[1][\protect\proofname]{\par
\normalfont\topsep6\p@\@plus6\p@\relax
\trivlist
\itemindent\parindent
\item[\hskip\labelsep
\scshape
#1]\ignorespaces
}{%
\endtrivlist\@endpefalse
}
\providecommand{\proofname}{Proof}
\providecommand{\lemmaname}{Lemma}
\providecommand{\theoremname}{Theorem}
\begin{document}

\title{Validity of the fractional Leibniz rule on a coarse-grained medium
yields a modified fractional chain rule}

\author{J. Weberszpil\corref{cor1}}

\ead{josewebe@gmail.com}

\cortext[cor1]{ Universidade Federal Rural do Rio de Janeiro, UFRRJ-IM/DTL\\
 Av. Governador Roberto Silveira s/n- Nova Iguaçú, Rio de Janeiro,
Brasil, 695014.}
\begin{abstract}
In this short communication, we show that the validity of the Leibniz
rule for a fractional derivative on a coarse-grained medium brings
about a modified chain rule, in agreement with alternative versions
of fractional calculus. We compare our results to those of a recent
article on this matter.\end{abstract}
\begin{keyword}
fractional Leibniz rule\sep fractional chain rule\sep Hölder space\sep
coarse-grained medium\sep nondifferentiable functions
\end{keyword}
\maketitle

\section{Introduction}

Fractional calculus (FC) is a mathematical tool which has a wide range
of applications to mathematical modelling in physical sciences, biological
and biomedical engineering, control systems and dynamic modelling
of soils, among many others \cite{Ours,Jumarie}. It has a history
of over 300 years and an extensive bibliography has been produced
on this subject matter. Complex systems whose dynamical behaviour
is described by so-called anomalous functions, with fractional powers
decreasing behaviour are well described by the FC machinery. There
are several approaches to FC, depending on the applications envisaged.

In a recent article \cite{Tarasov}, it is argued that violation of
the Leibniz rule is a characteristic property of non-integer order
derivatives, and that non-integer order derivatives satisfying the
Leibniz rule must be of entire order $\alpha=1$. In the same article,
the author suggests that the result, which has been proven for a function
$f$ of class $C^{2}$, $f\in C^{2}(U)$, would hold also for any
formulation of FC applied to functions that are not necessarily differentiable.

In certain physical models of our real world, one deals with observables
that are not necessarily differentiable functions, and may have fractal
characteristics (so-called 'coarse-grained' spaces \cite{Gell-Mann}),
see e.g.\inputencoding{latin1}{ }\inputencoding{latin9}\cite{Stillinger},
\cite{Nottale} or \cite{G. }.

A continuous, nowhere integer-differentiable function, necessarily
exhibits random-like or pseudo-random features, which links this story
to extensions of the theory of stochastic differential equations to
describe stochastic dynamics driven by fractional Brownian motion
\cite{Jumarie1,Jumarie-Lagrang Fract,Jumarie2}. For an interesting
effort in building up a solid geometry and field theory in fractional
spaces, see \citep{G. ,G. - Arxiv1106.5787,G. - Arxiv1107.5041,G. - Arxiv1305.3497}.

A good mathematical framework for fractional derivative operators
is that of Hölder spaces $H^{\lambda}$ \cite{Samko-Kilbas-Marichev},
which finds abundant applications even in the stock market \cite{Stock Market}.

In this letter, we follow in the footsteps of \cite{Tarasov} and
start with a fractional differential operator $D^{\alpha}$ of order
$\alpha$, which is assumed to satisfy the Leibniz rule, then derive
a fractional chain rule of sorts for $D^{\alpha}$. Throughout we
deal with identities just as in \cite{Tarasov}, and not with $L^{2}-$type
almost-everywhere equalities throughout.

\section{Hölder space and coarse-grained media}

According to Samko et al. \citep{Samko-Kilbas-Marichev}, let $\Omega=(a,b),$
$-\infty<a<b<\infty$ , so $\Omega$ may be a finite interval, a half-line
or the whole line. First let $\Omega$ be a finite interval. A function
$F(x)$ is said to satisfy the Hölder condition of order $\lambda$
on $\Omega$ if 
\begin{equation}
|F(x_{1})-F(x_{2})|\le A|x_{1}-x_{2}|^{\alpha},\label{eq:Holder condition}
\end{equation}
for any $x_{1}$, $x_{2}\in\Omega,$ where $A$ is a constant and
$\alpha$ is the Hölder exponent. If the function $f(x)$ satisfies
the Hölder condition it is continuous on $\Omega$ \citep{Samko-Kilbas-Marichev},
that is, if the Hölder coefficient is merely bounded on compact subsets
of $\lyxmathsym{\textgreek{W}}$, then the function $F$ is said to
be locally Hölder-continuous with exponent $\lyxmathsym{\textgreek{a}}$
in $\lyxmathsym{\textgreek{W}}$.

Now to define the Hölder space, let again $\Omega$ be a finite interval.
We denote by $H^{\lambda}=H^{\lambda}(\Omega)$ the space of all functions
which in general are complex valued, and satisfying the Hölder condition
of a fixed order $\lambda$ on $\Omega.$ This is a locally convex
topological vector space.

Hölder space and nowhere differentiable functions are related. An
immediate example of this is the Weierstrass function that is nowhere
integer-differentiable \citep{Kolwankar- Nowhere}. The Weierstrass
function may be written as 
\begin{equation}
W_{\alpha}(x)=\sum_{n=0}^{\infty}b^{-n\alpha}\cos(b^{n}x)
\end{equation}
for some $0<\lyxmathsym{\textgreek{a}}<1$. Then $W_{\alpha}(x)$
is Hölder-continuous of exponent $\lyxmathsym{\textgreek{a}}$, which
is to say that there is a constant C such that 
\begin{equation}
|W_{\alpha}(x)-W_{\alpha}(y)|\le C|x-y|^{\alpha}
\end{equation}
for all x and y. Moreover, $W_{1}$ is Hölder-continuous of all orders
$\lyxmathsym{\textgreek{a}}<1$ but not Lipschitz continuous.

\section{Main results}

Below we derive, out of the Leibniz rule, a fractional chain rule
of sorts for compositions $f\circ w$ where $f$ is $C^{2}$ and $w$
is not necessarily $C^{1}$, similarly as in \citep{Jumarie}. Here,
we assume the same premisses for the conditions of operational linearity,
the fractional derivative of a constant being zero and the validity
of the fractional Leibniz rule as in \citep{Tarasov}.

Just as a locally $L^{1}$ function $f$ is a measurable function
which is $L^{1}$ at every compact interval, we call a function locally
Hölder of exponent $\alpha$ if and only if $f$ is Hölder of exponent
$\alpha$ on every finite interval $[a,b].$ One such example is that
of $f\in C^{1}(\mathbb{R})$, taking $\alpha=1$.
\begin{lem}
Let $f,g$ be functions that are locally Hölder-continuous of exponent
$\alpha$, namely, satisfying the Hölder condition of exponent $\alpha$
on every compact interval $[a,b]$ of their domain. Then so is $fg.$\end{lem}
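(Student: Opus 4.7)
The plan is to fix an arbitrary compact interval $[a,b]$ in the common domain and show that $fg$ satisfies the Hölder condition of exponent $\alpha$ on $[a,b]$, with a constant depending only on the Hölder constants of $f$ and $g$ on $[a,b]$ together with their sup-norms there. Since $[a,b]$ was arbitrary, this establishes that $fg$ is locally Hölder-continuous of exponent $\alpha$.

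First I would invoke the standard algebraic identity
\begin{equation}
f(x_1)g(x_1) - f(x_2)g(x_2) = f(x_1)\bigl[g(x_1)-g(x_2)\bigr] + g(x_2)\bigl[f(x_1)-f(x_2)\bigr],
\end{equation}
and then apply the triangle inequality. Using the hypothesis that $f$ and $g$ each satisfy \eqref{eq:Holder condition} on $[a,b]$ with constants $A_f$ and $A_g$ respectively, each bracketed factor is bounded by $A_f |x_1-x_2|^\alpha$ or $A_g |x_1-x_2|^\alpha$.

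Next I would need to control the prefactors $|f(x_1)|$ and $|g(x_2)|$. This is where compactness of $[a,b]$ plays the decisive role: as noted in the excerpt, a function satisfying the Hölder condition on $\Omega$ is continuous there, hence bounded on the compact interval $[a,b]$. Setting $M_f = \sup_{[a,b]}|f|$ and $M_g = \sup_{[a,b]}|g|$, I obtain
\begin{equation}
|f(x_1)g(x_1) - f(x_2)g(x_2)| \le \bigl(M_f A_g + M_g A_f\bigr)\,|x_1-x_2|^{\alpha},
\end{equation}
for all $x_1,x_2 \in [a,b]$, which is precisely the Hölder condition of exponent $\alpha$ for $fg$ on $[a,b]$ with constant $A_{fg} = M_f A_g + M_g A_f$.

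There is essentially no real obstacle here; the only subtle point worth emphasizing is that the argument crucially uses the word \emph{locally}. Without the compactness of $[a,b]$ one could not extract the uniform bounds $M_f, M_g$, and a global statement would fail (for example, the identity function is Hölder of exponent $1$ on $\mathbb{R}$ but its square $x^2$ is not). Restricting to each compact interval is exactly what sidesteps this difficulty, and is also the reason the notion of \emph{locally} Hölder-continuous was introduced just before the statement of the lemma.
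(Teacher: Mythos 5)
Your proof is correct and is exactly the argument the paper has in mind: the paper's own ``proof'' consists of the single sentence that it is ``entirely analogous to that of the formula for the derivative of a product,'' i.e.\ precisely the add-and-subtract decomposition plus boundedness on compacta that you spell out. Your version is simply the fully written-out form of the same approach, and your closing remark on why locality (compactness) is essential is a worthwhile addition the paper omits.
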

\begin{proof}
It is entirely analogous to that of the formula for the derivative
of a product. $\blacksquare$\end{proof}
\begin{thm}
Let $f\in C^{2}(I)$, where $I,J\subset{\mathbb{R}}$, and let $w:J\to I$
be a Hölder-continuous function of exponent $0<\alpha<1$, where $J\subset{\mathbb{R}}$
is another interval. Consider a fractional derivative $D^{\alpha}$
of order $\alpha$, satisfying the Leibniz rule, whose domain includes
all locally Hölder-continuous functions of order $\alpha$. Then the
following statement holds: 
\begin{equation}
D_{x}^{\alpha}(f\circ w)=D_{w}^{\alpha}f(w(x))(D_{x}^{\alpha}w(x)).
\end{equation}
\end{thm}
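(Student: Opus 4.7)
The plan is to Taylor-expand $f$ around a base point and distribute $D_{x}^{\alpha}$ across the expansion, collapsing the surviving terms by means of the Leibniz rule.

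First, I would fix an interior point $x_{0}\in J$, set $w_{0}:=w(x_{0})$, and use the hypothesis $f\in C^{2}(I)$ to write Taylor's formula with remainder,
\[
f(w(x))=f(w_{0})+f'(w_{0})\bigl(w(x)-w_{0}\bigr)+\frac{1}{2}f''(\eta(x))\bigl(w(x)-w_{0}\bigr)^{2},
\]
for some $\eta(x)$ between $w(x)$ and $w_{0}$. The preceding lemma, together with the fact that $f$, $f'$, $f''$ are locally Lipschitz on compact subsets of $I$ and $w$ is locally H\"{o}lder of exponent $\alpha$, guarantees that each summand is itself locally H\"{o}lder of exponent $\alpha$, so that $D_{x}^{\alpha}$ may be applied.

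Second, I would apply $D_{x}^{\alpha}$ to both sides. Linearity, the Leibniz rule applied to the product $f'(w_{0})(w(x)-w_{0})$ (with $f'(w_{0})$ constant in $x$), and the vanishing of $D_{x}^{\alpha}$ on $x$-constants together give
\[
D_{x}^{\alpha}f(w(x))=f'(w_{0})\,D_{x}^{\alpha}w(x)+\frac{1}{2}\,D_{x}^{\alpha}\!\bigl[f''(\eta(x))(w(x)-w_{0})^{2}\bigr].
\]
A routine induction based on Leibniz yields
\[
D_{x}^{\alpha}\bigl[(w(x)-w_{0})^{n}\bigr]=n(w(x)-w_{0})^{n-1}D_{x}^{\alpha}w(x),\qquad n\geq 1,
\]
so expanding the remainder term via Leibniz produces a sum in which every summand carries a factor $(w(x)-w_{0})$; at $x=x_{0}$ each such factor vanishes. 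Thus $D_{x}^{\alpha}(f\circ w)(x_{0})=f'(w_{0})\,D_{x}^{\alpha}w(x_{0})$, and since $x_{0}$ was arbitrary the identity $D_{x}^{\alpha}(f\circ w)(x)=f'(w(x))\,D_{x}^{\alpha}w(x)$ holds throughout $J$. Running the same Taylor/Leibniz reduction on $f$ viewed as a function of its own variable $w$ then identifies $D_{w}^{\alpha}f(w(x))$ with $f'(w(x))$ within the coarse-grained calculus considered here, which upon substitution produces the chain rule as stated.

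The main obstacle I expect lies in rigorously handling the remainder: since $\eta$ is itself a function of $x$, expanding $D_{x}^{\alpha}[f''(\eta(x))(w(x)-w_{0})^{2}]$ by Leibniz is not literally the same as operating on a polynomial, and one must verify that every Leibniz-generated cross term retains at least one factor of $(w(x)-w_{0})$ so that it vanishes at $x=x_{0}$. A cleaner alternative is to pass to the full analytic Taylor series of $f$, where the induction above applies termwise and only the $n=1$ contribution survives at the base point; this route implicitly requires a continuity hypothesis permitting the exchange of $D^{\alpha}$ with an infinite series, and the H\"{o}lder-exponent-$\alpha$ environment is precisely what makes such an exchange legitimate.
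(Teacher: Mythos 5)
Your overall strategy --- expand $f$ to second order about $w_{0}=w(x_{0})$, apply $D_{x}^{\alpha}$, and kill the remainder at $x_{0}$ via the Leibniz rule --- is exactly the paper's, and the constant and first-order terms are handled identically. The genuine gap, which you flag but do not close, is the form of the remainder. With the Lagrange form $\frac{1}{2}f''(\eta(x))(w(x)-w_{0})^{2}$ you must apply the Leibniz rule to a product whose first factor is $f''(\eta(x))$; but $\eta(x)$ is only known to exist pointwise, and even after observing that $\frac{1}{2}f''(\eta(x))$ coincides (off the zero set of $w-w_{0}$) with $g_{2}(w(x))$, where $g_{2}(t)=\int_{0}^{1}(1-s)f''(t_{0}+s(t-t_{0}))\,ds$, that factor is merely a continuous function of $w$ composed with a H\"{o}lder map, so it need not be locally H\"{o}lder of exponent $\alpha$, need not lie in the domain of $D^{\alpha}$, and the Leibniz rule cannot legitimately be invoked on your grouping. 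Your proposed repair --- passing to the full analytic Taylor series and differentiating termwise --- silently strengthens the hypothesis from $f\in C^{2}$ to $f$ analytic and requires an unjustified interchange of $D^{\alpha}$ with an infinite sum, so it is not available under the stated assumptions.

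The paper closes this gap with a specific regrouping that your write-up is missing: it writes the remainder in the integral (Hadamard) form $[g_{2}(w(x))(w(x)-w_{0})]\cdot(w(x)-w_{0})$ and observes that $t\mapsto g_{2}(t)(t-t_{0})$ is of class $C^{1}(I)$, hence locally Lipschitz, hence its composition with the H\"{o}lder-$\alpha$ function $w$ is again locally H\"{o}lder of exponent $\alpha$ and is, by the Lemma, an admissible factor. The remainder is then a bona fide product of two functions in the domain of $D^{\alpha}$, the Leibniz rule applies, and each resulting term carries a factor $(w(x)-w_{0})$ that vanishes at $x=x_{0}$; your induction on powers $(w-w_{0})^{n}$ then becomes unnecessary. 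Finally, your closing identification of $D_{w}^{\alpha}f$ with $f'$ is asserted rather than derived --- the Taylor/Leibniz reduction only ever produces $f'(w(x_{0}))(D_{x}^{\alpha}w)(x_{0})$, which is in fact exactly what the paper's own displayed conclusion reads --- so you should not present that identification as a consequence of the same argument.
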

\begin{proof}
Clearly, $D_{x}^{\alpha}1=0$ \citep{Tarasov}, since the Leibniz
rule is assumed. Let $x_{0}\in J$, and let $t_{0}=w(x_{0})$ and
$t=w(x)$. Indeed, by repeated integration by parts one obtains the
following identity: 
\begin{equation}
f(t)-f(t_{0})=f'(t_{0})(t-t_{0})+g_{2}(t)(t-t_{0})^{2},\label{eq:eqHadamard}
\end{equation}
where $g_{2}(t)=\int_{0}^{1}(1-s)\, f''(t_{0}+s(t-t_{0}))ds$. Note
that,

\begin{equation}
g_{2}(t)=\left\{ \begin{array}{ccc}
\frac{f(t)-f(t_{0})}{(t-t_{0})^{2}}-\frac{f'(t_{0})}{(t-t_{0})}= & \int_{t_{0}}^{t}\frac{(t-\tau)f''(\tau)}{(t-t_{0})^{2}}d\tau, & t\neq t_{0}\\
2^{-1}f''(x), &  & t=t_{0}
\end{array}\right.,\label{eq:g2}
\end{equation}
and using Taylor\textquoteright{}s theorem 
\begin{equation}
g_{2}(t)=\left\{ \begin{array}{cc}
\frac{2^{-1}f''(t_{0})(t-t_{0})^{2}+o((t-t_{0})^{2})}{(t-t_{0})^{2}}, & t\neq t_{0}\\
2^{-1}f''(x), & t=t_{0}
\end{array}\right..
\end{equation}

From which it follows directly that $g_{2}(t)(t\lyxmathsym{\textminus}t_{0})$
and $g_{2}(t)(t\lyxmathsym{\textminus}t_{0})^{2}$ belong, respectiely,
to $C^{1}(I)$ and $C^{2}(I)$.

Now, from eq.(\ref{eq:eqHadamard}) one obtains the following identity:

\begin{equation}
f(w(x))=f(w(x_{0}))+f'(w(x_{0}))(w(x)-w(x_{0}))+[g_{2}(w(x))(w(x)-w(x_{0}))][w(x)-w(x_{0})].\label{eq:formfin}
\end{equation}

Applying the Leibniz rule to the product $[g_{2}(w(t))(w(t)-w(x_{0}))]\cdotp(w(x)-w(x_{0})$
yields 
\begin{equation}
D_{x}^{\alpha}([g_{2}(w(x))(w(x)-w(x_{0}))]\cdotp(w(x)-w(x_{0}))=(w(x)-w(x_{0}))H(x),
\end{equation}
for a suitable function $H(x)$. It follows that, for $x=x_{0}$,

\begin{equation}
D_{x}^{\alpha}(g_{2}(w(x))(w(x)-w(x_{0}))^{2})|_{x=x_{0}}=0.
\end{equation}

Thus, on applying $D_{x}^{\alpha}$ to (\ref{eq:formfin}) and evaluating
on $x_{0}\in J$, one obtains 
\begin{equation}
D_{x}^{\alpha}(f\circ w)(x_{0})=f'(w(x_{0}))(D_{x}^{\alpha}w)(x_{0}),\label{eq:chainrule}
\end{equation}
which settles the Theorem. $\blacksquare$ 
\end{proof}
We remark that every function $w$ in the domain of $D^{\alpha}$
is allowed in our formula, provided $f\circ w$ also lies in its domain.

\subsection*{Non-Integer Differentiable Functions}

Consider now that $w$ is of class $C^{1}$ and $f$ is Hölder-continuous,
then for Riemann-Liouville fractional derivatives and even for the
Caputo definition, the scale property holds through 
\begin{equation}
D_{x}^{\alpha}f(\lambda x)=\lambda^{\alpha}D_{w}^{\alpha}f(w);\qquad w=\lambda x.\label{eq:11}
\end{equation}

But, considering the relation 
\begin{equation}
\lambda^{\alpha}=[D_{x}^{1}(\lambda x)]^{\alpha}\label{eq:12}
\end{equation}
then, eq.(\ref{eq:11}) can be rewritten as 
\begin{equation}
D_{x}^{\alpha}f(\lambda x)=[D_{x}^{1}(\lambda x)]^{\alpha}D_{w}^{\alpha}f(w)\label{eq:13}
\end{equation}
or 
\begin{equation}
D_{x}^{\alpha}(f\circ w)=[(D_{w}^{\alpha}f)\circ w]\cdotp(w')^{\alpha}.\label{eq:formjumarie}
\end{equation}

Eq. (\ref{eq:formjumarie}) turns out to be the same chain rule that
is valid for nondifferentiable functions in the alternative versions
of FC \cite{Jumarie}, considering $w=\lambda x$ as differentiable
while $f(w)$ is nondifferentiable.

\section{Conclusions}

Departing from the main result in \citep{Tarasov}, we endeavoured
to study the case where the fractional derivative $D_{x}^{\alpha}$
contains locally Hölder functions of exponent $\alpha$ in its domain,
and satisfies the Leibniz rule. Hadamard's representation theorem
for $f\in C^{2}$ around an arbitrary point yields a fractional chain
rule for such $D^{\alpha}$ applied to $f\circ w$, where $w$ is
locally Hölder of exponent $\alpha$, cf. eq (\ref{eq:chainrule}).
As well, we have seen in (\ref{eq:13}) a formula in the case where
$w$ is of class $C^{1}$ and $f$ is Hölder-continuous.

Our main goal here was to reassess a generalization of the Main Theorem
in \cite{Tarasov}. In so doing, we derived a fractional chain rule
for a fractional derivative of order $0<\alpha\leq1$ whose domain
includes $C^{1}$ or even locally Hölder-continuous functions (which
describe coarse-grained media), provided the Leibniz rule holds. This
in turn implies that Leibniz role holds for some alternative definitions
of fractional derivatives \citep{Jumarie,Kolwankar- Nowhere,Adda e Cresson,Kolwankar -smoofthless}.

\textbf{\bigskip{}
 } \textbf{Acknowledgment:} J. A. Helayël-Neto and J. J. Ramón-Marí
are gratefully acknowledged for discussions at all stages of this
work.

\textbf{\bigskip{}
 }


\begin{thebibliography}{10}
\bibitem{Ours}J. Weberszpil, C.F.L. Godinho, A. Cherman and J.A.
Helayël-Neto, ''Aspects of the Coarse-Grained-Based Approach to a
Low-Relativistic Fractional Schr\"odinger Equation'', In: 7th Conference
Mathematical Methods in Physics - ICMP 2012, 2012, Rio de Janeiro.
Proceedings of Science (PoS). Trieste, Italia: SISSA. Trieste, Italia:
Published by Proceedings of Science (PoS), 2012. p. 1-19; C.F.L. Godinho,
J. Weberszpil, J.A. Helayël-Neto, ''Extending the D'Alembert Solution
to Space-Time Modified Riemann-Liouville Fractional Wave Equations'',
Chaos, Solitons \& Fractals \textbf{45,} 765\textendash{}771 (2012).

\bibitem{Jumarie}Guy Jumarie, ''On the derivative chain-rules in
fractional calculus via fractional difference and their application
to systems modeling, Cent. Eur. J. Phys. 1-17 (2013); Jumarie G.,
J. Appl. Math. \& Computing Vol. \textbf{24}, No. 1 - 2, 31 - 48 (2007);
Applied Mathematics Letters \textbf{22}, 378-385 (2009);. Guy Jumarie,
''Table of some basic fractional calculus formula derived from a
modified Riemann-Liouville derivative for nondifferentiable functions'',
Applied Mathematics Letters \textbf{22} 378-385 (2009).

\bibitem{Tarasov}\textit{\emph{Tarasov, V.E. ''No violation of the
Leibniz rule. No fractional derivative}}'', Commun. Nonlinear Sci.
Numer. Simulat. \textbf{18}, 2945-2948, (2013).

\bibitem{Gell-Mann}Gell-Mann, Murray and Hartle, James B., ''Quasiclassical
coarse graining and thermodynamic entropy'', Phys. Rev. A 76, 022104
(2007).

\bibitem{Stillinger}F.H. Stillinger, J. Math. Phy. 18 (1977) 1224.

\bibitem{Nottale}L. Nottale Fractal space-time and micro physics.
Singapore: World Scientifi{}c; 1993; Chaos Solitons Fract 4 (1994)
361\textendash{}388.

\bibitem{G. }G. Calcagni, A. Eichhorn, F. Saueressig, Phys. Rev.
D 87, (2013) 124028.

\bibitem{Jumarie1}Guy Jumarie, J. Appl. Math. \& Computing 24, No.
1-2, (2007) 31-48; Applied Mathematics Letters 22, (2009) 378-385;
Applied Mathematics Letters 22, (2009) 378-385.

\bibitem{Jumarie-Lagrang Fract}G. Jumarie, Chaos, Solitons and Fractals
41, (2009) 1590\textendash{}1604.

\bibitem{Jumarie2}G. Jumarie, Computers \& Mathematics with Applications
59(3) (2010) 1142-1164.

\bibitem{G. - Arxiv1106.5787}G. Calcagni, Adv. Theor. Math. Phys.
16 (2012) 549-644; arXiv:1106.5787.

\bibitem{G. - Arxiv1107.5041}G. Calcagni, JHEP01(2012)065; arXiv:1107.5041.

\bibitem{G. - Arxiv1305.3497}G. Calcagni, João Magueijo, and David
Rodríguez Fernández, arXiv:1305.3497 .

\bibitem{Samko-Kilbas-Marichev}S.G. Samko, A.A. Kilbas and O.I. Marichev,
Fractional integrals and derivatives, Gordon and Breach, New York,
USA, 1993.

\bibitem{Stock Market}Yu. A Kuperin, R.R. Schastlivtsev, ''Modified
Holder Exponents Approach to Prediction of the USA Stock Market Critical
Points and Crashes'', arXiv:0802.4460.

\bibitem{Kolwankar- Nowhere}Kiran M. Kolwankar and Anil D. Gangal,''Fractional
differentiability of nowhere differentiable functions and dimensions'',
Chaos 6, 505 (1996).

\bibitem{Adda e Cresson}Fayçal Ben Adda and Jacky Cresson, ''About
Non-differentiable Functions'', Journal of Mathematical Analysis
and Applications 263, 721\textendash{}737 (2001).

\bibitem{Kolwankar -smoofthless}K. M. Kolwankar, J. Lévy Véhel ,''Measuring
functions smoothness with local fractional derivatives'', FCAA 4,
3 285-301 (2001).\end{thebibliography}
\end{document}